\newtheorem{thm}{Theorem}[section]
\newtheorem{lma}{Lemma}[section]
\theoremstyle{remark}
\newtheorem{rmk}{Remark}[section]
\def\R{\mathbb{R}}
\def\E{\mathbb{E}}
\newcommand{\norm}[1]{\lVert#1\rVert}
\newcommand{\ind}[1]{\mathbf{1}_{#1}}
\newcommand{\abs}[1]{\left\vert#1\right\vert}
\newcommand{\ud}{\mathrm{d}}
\numberwithin{equation}{section}
\font\eka=cmex10
\def\ind{\mathrel{\hbox{\rlap{%
\hbox to 7.5pt{\hrulefill}}\raise6.6pt\hbox{\eka\char'167}}}}
\begin{document}
\title[Integral representation w.r.t. fBm]{Integral representation with adapted continuous integrand with respect to fractional Brownian motion}

\author[Shevchenko]{Georgiy Shevchenko}
\address{Department of Mechanics and Mathematics, Taras Shevchenko National University of Kyiv, Volodymirska 60, 01601 Kyiv, UKRAINE}

\author[Viitasaari]{Lauri Viitasaari}
\address{Department of Mathematics and System Analysis, Aalto University School of Science, Helsinki\\
P.O. Box 11100, FIN-00076 Aalto,  FINLAND} 

\begin{abstract}
We show that if a random variable is a final value of an adapted H\"older continuous process, then it can be represented as a stochastic integral with respect to fractional Brownian motion, and the integrand is an adapted process, continuous up to the final point.

\medskip

\noindent
{\it Keywords}: fractional Brownian motion, pathwise integral,  generalized Lebesgue--Stieltjes integral, integral representation

\smallskip

\noindent
{\it 2010 AMS subject classification}: 60G22, 60H05, 60G15   
\end{abstract}

\maketitle

\section{Introduction}
In this paper we continue the research started in \cite{msv} and \cite{lauri}. Namely, we are interested in representations of the
\begin{equation}\label{representation}
\xi = \int_0^1 \psi(s) \ud B^H(s),
\end{equation}
where $B^H$ is a fractional Brownian motion with Hurst parameter $H>1/2$, and the integrand $\psi$ is adapted.  Such question is motivated by financial mathematics, where capitals of self-financing strategies are given by stochastic integrals with respect to asset pricing processes. 

In \cite{msv} it was proved that under some assumptions on the process $\xi$ the representation (\ref{representation}) exists. Moreover, it was also proved that under additional assumption that $\psi(s)$ is continuous, the assumptions are also necessary. In this paper we extend the results of \cite{msv}, showing the existence of the representation \eqref{representation} with $\psi\in C[0,1)$ under the same assumptions as in \cite{msv}.

The paper is organized as follows. Section~\ref{sec:prelim} provides necessary information on the fractional Brownian motion and the definition of stochastic integral we use. Section~\ref{sec:main} contains the main result on the existence of a representation with a continuous integrand. Section~\ref{sec:blahblahblah} concludes the article with a discussion of our findings and their  comparison with existing results.

\section{Preliminaries}\label{sec:prelim}
Let $(\Omega,\mathcal{F},\mathbb{F} = \{\mathcal{F}_t,t\in [0,1]\}, P)$ be a stochastic basis, and $\{B^H(t),t\in [0,1]\}$ be a fractional Brownian motion (fBm) on this stochastic basis, i.e.\ an $\mathbb{F}$-adapted centered Gaussian process with the covariance function
$$
\E {B^H(t) B^H(s)} = \frac12 \left( s^{2H} + t^{2H} - | s-t| ^{2H} \right), \ s,t \in [0,1].
$$
It is well known that the fBm $B^H$ has a continuous modification, moreover, this modification is H\"older continuous of any order $\gamma\in(0,H)$. In what follows we will assume that $B^H$ is continuous.

We will understand the integral with respect to the fBM $B^H$ as the generalized fractional Lebesgue--Stieltjes integral. Below we give only basic information on the integral, the details can be found in \cite{zahle}. 

Let  $f,g\colon[a,b]\to \R$, $\beta\in (0,1)$. Define the forward and backward fractional Riemann--Liouville derivatives
\begin{gather*}
\big(D_{a+}^{\beta}f\big)(x)=\frac{1}{\Gamma(1-\beta)}\bigg(\frac{f(x)}{(x-a)^\beta}+\beta
\int_{a}^x\frac{f(x)-f(u)}{(x-u)^{1+\beta}}du\bigg),\\
\big(D_{b-}^{1-\beta}g\big)(x)=\frac{e^{i\pi
\beta}}{\Gamma(\beta)}\bigg(\frac{g(x)}{(b-x)^{1-\beta}}+(1-\beta)
\int_{x}^b\frac{g(x)-g(u)}{(u-x)^{2-\beta}}du\bigg),
\end{gather*}
where $x\in(a,b)$.

The generalized Lebesgue--Stieltjes
integral 
is defined as
\begin{equation*}\int_a^bf(x)\ud g(x)=e^{-i\pi\beta}\int_a^b\big(D_{a+}^{\beta}f\big)(x)\big(D_{b-}^{1-\beta}g_{b-}\big)(x)\ud x
\end{equation*}
provided the integral in the right-hand side exists. Thanks to the H\"older continuity of $B^H$, it can be easily shown that for any $\beta\in(1-H,1)$,  
\begin{equation}\label{finitederiv}
\sup_{0\le u<s\le 1} \abs{D_{u-}^{1-\beta}B^H_{s-}}<\infty.
\end{equation}
 Therefore, for
$f$ with $D_{a+}^\beta f\in L_1[a,b]$ we can define the integral with respect to
$B^H$ as
\begin{equation*}
\int_a^bf(s)\ud B^H(s)=e^{-i\pi\beta}\int_a^b\big(D_{a+}^{\beta}f\big)(x)\big(D_{b-}^{1-\beta}B^H_{b-}\big)(x)\ud x.
\end{equation*}
Now fix some $\beta \in(1-H,1/2)$ and introduce the following norm:
\begin{gather*}
\norm{f}_{\beta,[a,b]} = \int_a^b \left(\frac{\abs{f(s)}}{(s-a)^\beta} + \int_a^s \frac{\abs{f(s)-f(z)}}{(s-z)^{1+\beta}}dz\right)ds
\end{gather*}
and abbreviate $\norm{f}_{\beta,t} = \norm{f}_{\beta,[0,t]}$.
In view of \eqref{finitederiv}, the integral $\int_0^t f_s \ud B^H_s$ is well defined once $\norm{f}_{\beta,t}<\infty$. Hence, the class of admissible integrands for us will consist of the processes satisfying this condition. We remark that in particular the integral is well defined once $f$ is H\"older continuous of some order $\nu>1-H$; in this case it is equal to the limit of integral sums, i.e.\ to the so called Young integral.

For our main result, we will need the following change of variable formula for fBm.
\begin{thm}[\cite{amv}]\label{ito}
Let $f:\R\to \R$ be a function of locally bounded variation, $F(x) = \int_0^x f(y) dy$. Then for any $\beta\in(1-H,1/2)$
\ $\norm{f(B^H(\cdot))}_{\beta,1}<\infty$ a.s. and
$$
F(B^H(t)) = \int_0^t f(B^H(s)) \ud B^H(s),\quad t\in[0,1].
$$
\end{thm}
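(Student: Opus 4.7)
The plan is to establish the identity by smooth approximation: approximate $f$ by $C^1$ functions $f_n$, apply the classical Young chain rule for each $f_n$, and pass to the limit using the pathwise continuity estimate for the generalized Lebesgue--Stieltjes integral. The smooth case itself is routine: when $f\in C^1$, the composition $f\circ B^H$ inherits the H\"older regularity of $B^H$ of every order $\gamma\in(0,H)$, and choosing $\gamma>\beta$ gives $\norm{f(B^H(\cdot))}_{\beta,1}<\infty$ a.s.; the identity then follows from the Young chain rule, since the generalized Lebesgue--Stieltjes integral agrees with the Young limit-of-Riemann-sums integral on H\"older regular integrands.

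For locally BV $f$, set $f_n=f*\rho_n$ with a standard mollifier $\rho_n$, and $F_n(x)=\int_0^x f_n(y)\,dy$; by the smooth case, $F_n(B^H(t))=\int_0^t f_n(B^H(s))\,\ud B^H(s)$. Since $F$ is locally Lipschitz and $B^H$ is a.s.\ bounded on $[0,1]$, $F_n(B^H(t))\to F(B^H(t))$ a.s. The continuity estimate $\abs{\int_0^t g\,\ud B^H}\le C(\omega)\,\norm{g}_{\beta,t}$, with $C(\omega):=\sup_{u<s\le 1}\abs{D^{1-\beta}_{u-}B^H_{u-}}<\infty$ by \eqref{finitederiv}, reduces the task to proving (a) $\norm{f(B^H(\cdot))}_{\beta,1}<\infty$ a.s., and (b) $\int_0^t f_n(B^H(s))\,\ud B^H(s)\to\int_0^t f(B^H(s))\,\ud B^H(s)$ a.s.

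For (a), the BV bound $\abs{f(B^H(s))-f(B^H(z))}\le\abs{\mu_f}(I_{z,s})$ with $I_{z,s}=[B^H(z)\wedge B^H(s),B^H(z)\vee B^H(s)]$, combined with Fubini, reduces matters to a uniform-in-$y$ bound on
$$
\Psi(y):=\int_0^1\!\!\int_0^s\frac{\mathbf{1}_{\{y\in I_{z,s}\}}}{(s-z)^{1+\beta}}\,dz\,ds.
$$
Using the H\"older estimate $\abs{B^H(s)-B^H(z)}\le K(\omega)(s-z)^\gamma$ with $\gamma\in(\beta,H)$, the event $\{y\in I_{z,s}\}$ forces $\abs{B^H(z)-y}\le K(s-z)^\gamma$; substituting $u=s-z$, swapping integration order, and bounding the resulting inner occupation integral using the a.s.\ bounded local time $L_1$ of $B^H$ yields $\Psi(y)\le C(\omega)$ uniformly in $y$. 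Since $y\in I_{z,s}$ only for $y$ in the a.s.\ bounded range of $B^H$, (a) follows.

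The hardest step is (b). The naive bound $\norm{(f-f_n)(B^H(\cdot))}_{\beta,1}\le\int\Psi\,d\abs{\mu_{f-f_n}}$ need not vanish for discontinuous $f$ (for a Heaviside $f$ and its mollifications, $\abs{\mu_{f-f_n}}$ has total mass $\ge 2$). One must instead exploit that $f_n(B^H(s))\to f(B^H(s))$ for a.e.\ $s\in[0,1]$ — since $B^H$ spends zero Lebesgue time at any fixed level — and invoke the domination from (a) for the single-integral part of the norm. For the double-integral part, one uses the weak convergence $\mu_{f_n}\rightharpoonup\mu_f$ together with the continuity of $\Psi(y)$ in $y$ (arising from joint continuity of the local time of fBm) to extract the needed vanishing in the limit. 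This reduction to local-time regularity is the technical heart of the proof.
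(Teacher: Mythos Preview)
The paper does not prove this theorem: it is quoted from the reference \texttt{[amv]} and used as a black box. There is therefore no ``paper's own proof'' to compare your proposal against. What I can do is assess your sketch on its own merits.

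The smooth case and part (a) are fine. The local-time argument for $\Psi(y)$ is correct: from the H\"older bound one gets $\Psi(y)\le C\int_0^1 |B^H(z)-y|^{-\beta/\gamma}\,dz$, and with $\beta/\gamma<1$ the occupation-times formula and boundedness of the local time yield the uniform estimate.

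Part (b), however, is a genuine gap. You correctly observe that the crude bound $\norm{(f-f_n)(B^H)}_{\beta,1}\le\int\Psi\,d|\mu_{f-f_n}|$ need not vanish. But your proposed fix --- ``weak convergence $\mu_{f_n}\rightharpoonup\mu_f$ together with continuity of $\Psi$'' --- does not repair it. Weak convergence gives $\int\Psi\,d\mu_{f_n}\to\int\Psi\,d\mu_f$, i.e.\ $\int\Psi\,d(\mu_{f_n}-\mu_f)\to 0$; it says nothing about $\int\Psi\,d|\mu_{f_n}-\mu_f|$, which is what your estimate produces. For the Heaviside example you yourself raise, $\int\Psi\,d|\mu_{f_n}-\mu_f|$ stays bounded away from zero (since $\Psi$ is continuous and strictly positive near $0$, while $|\mu_{f_n}-\delta_0|$ has mass $2$ concentrated near $0$). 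So the argument, as written, does not establish $\norm{(f-f_n)(B^H)}_{\beta,1}\to 0$, and without that the continuity estimate does not deliver convergence of the integrals. A workable route is to first reduce by Jordan decomposition to monotone $f$, and then argue via dominated convergence directly on the defining double integral rather than through the total-variation bound; alternatively one can bypass the norm entirely and work with the signed kernel $D^{1-\beta}_{t-}B^H_{t-}$, but either way the step requires more than what you have sketched.
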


Throughout the article, we will use the symbol $C$ to denote a generic constant, whose value may change from one line to another. If the constant in question depends on some parameters, we will write them in a subscript. 

\section{Main results}
\label{sec:main}

In this section we establish, under certain conditions on $\xi$, the existence of representation \eqref{representation} with an integrand $\psi\in C[0,1)$. The techniques are similar to those used in \cite{msv}, so we will omit some minor details.

As in \cite{msv}, we start with an auxiliary lemma. 
\begin{lma}
\label{lemma}
There exists a $\mathbb{F}$-adapted continuous process $\phi$ on $[0,1]$ such that $\phi(0) = 0$, the integral
\begin{equation*}
\int_0^t \phi(s)\ud B^H(s)
\end{equation*}
exists for every $t<1$ and 
\begin{equation}
\label{rep:aux-lemma}
\lim_{t\to 1-} \int_0^t \phi(s)\ud B^H(s) = +\infty
\end{equation}
almost surely.
\end{lma}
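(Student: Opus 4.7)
My plan is to follow the template of the analogous auxiliary lemma in \cite{msv}, partitioning $[0,1)$ by a sequence $t_n := 1 - 2^{-n}$ and constructing $\phi$ piecewise on each interval $I_n = [t_n, t_{n+1}]$. The goal will be to arrange that each piece contributes a controlled nonnegative quantity to the pathwise integral, with contributions summing to $+\infty$, while $\phi$ remains continuous on $[0,1]$, $\F$-adapted, and satisfies $\phi(0)=0$.

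On each $I_n$, I would try an Ansatz of the form $\phi(s) = a_n\,\chi_n(s)\,(B^H(s) - B^H(t_n))$, where $\chi_n$ is a deterministic smooth bump supported on $I_n$ vanishing at both endpoints and $a_n$ is $\mathcal{F}_{t_n}$-measurable. This immediately makes $\phi$ adapted, continuous (vanishing at each $t_n$, including $t_0 = 0$), and bounded provided $\sup_{s\in I_n}|\phi(s)|\to 0$ as $n\to\infty$. Applying Theorem~\ref{ito} to $y\mapsto y^2/2$ on the shifted fractional Brownian motion $Y(s) := B^H(s) - B^H(t_n)$, and then integrating by parts against $\chi_n$, one obtains
\begin{equation*}
\int_{t_n}^{t_{n+1}} \phi(s)\,\ud B^H(s) = -\frac{a_n}{2}\int_{t_n}^{t_{n+1}} \chi_n'(s)\,(B^H(s)-B^H(t_n))^2\,\ud s.
\end{equation*}
With $\chi_n$ designed so that $\chi_n'\le 0$ on the portion of $I_n$ where $(B^H(s)-B^H(t_n))^2$ is typically largest, the $\mathcal{F}_{t_n}$-conditional mean of the right-hand side is a positive multiple of $a_n(t_{n+1}-t_n)^{2H}$.

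The remaining work would be to choose the amplitudes $a_n$ (possibly together with an $\mathcal{F}_{t_n}$-measurable sign exploiting the long-range dependence of fBm) and the bumps $\chi_n$ so that simultaneously (i) $\sup_{s \in I_n}|\phi(s)|\to 0$, ensuring continuity of $\phi$ at $s=1$, and (ii) the sum of the contributions diverges almost surely. The natural first step is to take $a_n$ growing fast enough to make $\sum_n \E\bigl[\int_{I_n}\phi\,\ud B^H\bigr] = +\infty$, and then to promote this to a.s.\ divergence via a Borel--Cantelli / Chebyshev argument based on second-moment bounds for the piecewise contributions.

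\textbf{Main obstacle.} The delicate point is balancing (i) and (ii): continuity of $\phi$ at $s=1$ limits how fast $a_n$ can grow, while the scaling $(t_{n+1}-t_n)^{2H} = 2^{-2H(n+1)}$ governs how much each piece can contribute. Exploiting $H>1/2$ and the adaptivity of $a_n$ is essential to make both conditions hold simultaneously; upgrading from divergence in expectation to the almost-sure statement \eqref{rep:aux-lemma} is the technical heart of the argument, where one must control the cross-correlations of the piecewise contributions across the intervals $I_n$.
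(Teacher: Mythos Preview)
Your Ansatz $\phi(s)=a_n\chi_n(s)\,(B^H(s)-B^H(t_n))$ cannot produce \eqref{rep:aux-lemma}, no matter how $a_n$ and $\chi_n$ are tuned. Set $M_n:=\sup_{s\in I_n}|B^H(s)-B^H(t_n)|$. Continuity of $\phi$ at $s=1$ forces $\sup_{I_n}|\phi|\le |a_n|\,M_n\to 0$, so $(a_nM_n)_n$ is a.s.\ bounded. Your own integration-by-parts identity gives
\[
\Bigl|\int_{I_n}\phi\,\ud B^H\Bigr|\le \tfrac12\,|a_n|\,M_n^{\,2}\int_{I_n}|\chi_n'(s)|\,\ud s,
\]
and for a unit-height bump $\int_{I_n}|\chi_n'|=2$, so the $n$-th block contributes at most $|a_n|M_n^{\,2}=(|a_n|M_n)\,M_n$. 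By the H\"older continuity of $B^H$ with $H>1/2$ we have $\sum_n M_n\le C(\omega)\sum_n 2^{-n(H-\epsilon)}<\infty$. Hence $\sum_n\bigl|\int_{I_n}\phi\,\ud B^H\bigr|<\infty$ a.s., and $\int_0^t\phi\,\ud B^H$ converges to a \emph{finite} limit as $t\to1-$. The balance you flag as ``delicate'' is in fact impossible for this Ansatz: the block contributions scale like the quadratic variation of $B^H$ (exponent $2H>1$), which is far too small to diverge once $\phi$ is continuous at $1$. Making $a_n$ adapted does not help, since the bound above is pathwise.

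The paper's proof avoids this by replacing $x\mapsto x^2/2$ with $x\mapsto g_n(x)^{1+\eta}$, a smooth version of $|x|^{1+\eta}$ with $\eta<\tfrac{1}{H}-1$ (so $1+\eta<2H$). One then takes $\phi=f_n(B^H-B^H(t_{n-1}))$ with $f_n=(g_n^{1+\eta})'$, and Theorem~\ref{ito} yields a \emph{pathwise nonnegative} block contribution $g_n(B^H(\tau_n)-B^H(t_{n-1}))^{1+\eta}$, where $\tau_n$ is a stopping time capping $|B^H-B^H(t_{n-1})|$ at $n^{-1/(1+\eta)}$; this cap also forces $|\phi|\to0$, giving continuity at $1$. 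Divergence then comes from the variation-type statement $\sum_k|B^H(\tau_k)-B^H(t_{k-1})|^{1+\eta}=+\infty$, which is a small-ball property of fBm established in \cite{msv}; no second-moment or cross-correlation analysis is needed. After $\tau_n$ the integrand is brought linearly back to zero on a short window, and those residual integrals are shown to be absolutely summable. The two missing ideas in your plan are therefore the sub-quadratic exponent $1+\eta$ and the stopping-time cap.
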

\begin{proof}
Fix numbers $\gamma\in\left(1,\frac{1}{H}\right)$, $\eta\in\left(0,\frac{1}{\gamma H}-1\right)$ and $\mu> \frac1{H(1+\eta)}$. 
Set $t_0 = 0$ and $t_n = \sum_{k=1}^{n}(\Delta_{k} + \widetilde\Delta_{k})$, where $\Delta_k = K k^{-\gamma}$, $\widetilde\Delta_k = K k^{-\mu}$, $K = \big(\sum_{k=1}^\infty (k^{-\gamma}+k^{-\mu})\big)^{-1}$. Also set $t_n' = t_{n-1} + \Delta_n$, $n\ge 1$. Clearly, $t_{n-1}<t_n' < t_n$, $n\ge 1$, and $t_n\to 1$, $n\to\infty$. 

Define the sequence of functions $g_n = \sqrt{x^2 + n^{-2}} - n^{-1}$, $n\ge 1$. Then $g_n(x) \uparrow |x|$, $n\to\infty$. Let also $f_n = (1+\eta) g_n(x)^\eta \frac{x}{\sqrt{x^2+n^{-2}}}$ so that $g_n(x)^{1+\eta} = \int_0^x f_n(z)\ud z$. 

Now we proceed to the construction of the process $\phi$. It is similar to the one used in \cite[Lemma 3.1]{msv}, but we need to make some adjustments to make the integrand continuous. 

For any $n\ge 1$ set
\begin{equation*}
\tau_n = \min\left\{t\geq t_{n-1} : |B^H(t) - B^H({t_{n-1}})| \geq n^{-{1}/({1+\eta})}\right\} \wedge t_n'.
\end{equation*}
Define  
\begin{equation*}
\phi(s) = f_{n}(B^H(s) - B^H(t_{n-1}))\textbf{1}_{[t_{n-1},\tau_n)}(s)
\end{equation*}
for $s\in[t_{n-1},\tau_n]$
and 
\begin{equation*}
\phi(s) = \phi(\tau_n)\frac{\tau_n + \widetilde \Delta_n-s}{\widetilde{\Delta}_n}\textbf{1}_{(\tau_n,\tau_n+\widetilde\Delta_n]}(s)
\end{equation*}
for $s\in(\tau_n,t_{n}]$. Evidently, $\phi(t_{n-1}) = \phi(t_n) = 0$ and $\phi\in C[t_{n-1},t_n]$ Therefore, $\psi \in C[0,1)$. The existence of the integral $\int_0^t \psi(s) \ud B^H(s)$ for any $t>1$ is clear. Indeed, for each $n\ge1$, on the interval $[t_n,\tau_n]$, $\phi$ is a smooth transformation of $B^H$, therefore, H\"older continuous of any order $\nu<H$, and on the interval $[\tau_n, t_{n}]$, it is piecewise linear.

In order to complete the proof, we need to show that (\ref{rep:aux-lemma}) holds. First by Theorem~\ref{ito}, we obtain  for any $n\ge 1$
\begin{equation*}
\int_{t_{n-1}}^{\tau_n} \phi(s)\ud B^H(s) = g_{n}(B^H({\tau_n})-B^H(t_{n-1}) )^{1+\eta}.
\end{equation*}
Hence, by additivity,
\begin{equation*}
\begin{split}
\int_0^{t_n} \phi(s)\ud B^H_s = \sum_{k=1}^n g_{k}(B^H_{\tau_k}-B^H_{t_{k-1}})^{1+\eta}+ \sum_{k=1}^{n} \int_{\tau_k}^{\tau_k + \widetilde\Delta_k} 
\phi (s)\ud B^H(s).
\end{split}
\end{equation*}
Now we will show that the first sum diverges to $+\infty$, while the second one converges.

Thanks to the Jensen's inequality, $(a+b)^{1+\eta} \leq 2^{\eta}(a^{1+\eta}+b^{1+\eta})$ for $a,b>0$, 
which implies
$$
g_{k}(B^H_{\tau_k}-B^H_{t_{k-1}})^{1+\eta} \ge 2^{-\eta}|B^H_{\tau_k}-B^H_{t_{k-1}}|^{1+\eta} - k^{-1-\eta}.
$$
From the proof of \cite[Lemma 3.1]{msv} it follows that
$$
\sum_{k=1}^\infty |B^H_{\tau_k}-B^H_{t_{k-1}}|^{1+\eta} = +\infty
$$
almost surely. Therefore, since $\sum_{k=1}^{\infty}k^{-1-\eta}<\infty$, we get $$ \sum_{k=1}^n g_{k}(B^H_{\tau_k}-B^H_{t_{k-1}})^{1+\eta} = +\infty$$ 
almost surely.

Further, 
$$
|f_{k}(x)|\leq (1+\eta)|x|^{\eta}
$$
and hence, by the definition of $\tau_k$, 
\begin{equation}\label{phibound}
|\phi(\tau_k)| \leq (1+\eta)n^{-\eta/(1+\eta)}.
\end{equation}
Consequently, we can estimate
\begin{align*}
\left|\int_{\tau_k}^{\tau_k + \widetilde\Delta_k} 
\phi (s)\ud B^H(s)\right|& = \left|\phi(\tau_k)\widetilde{\Delta}_k^{-1}
\int_{\tau_k}^{\tau_k + \widetilde\Delta_k} ({\tau_k + \widetilde \Delta_k-t}) \ud B^H(s)\right|\\
&\le C k^{-\eta/(1+\eta)}\widetilde{\Delta}_k^{-1}\left|\int_{\tau_k}^{\tau_k + \widetilde\Delta_k} \left({B^H(s) - B^H(\tau_k)}\right) \ud s\right|\\
&\le C k^{-\eta/(1+\eta)} \sup_{s\in[\tau_k,\tau_k+\widetilde{\Delta}_k]}\left|{B^H(s) - B^H(\tau_k)}\right|.
\end{align*}
By the H\"older properties of $B^H$, we have for any $\epsilon>0$
\begin{align*}
\left|\int_{\tau_k}^{\tau_k + \widetilde\Delta_k} 
\phi (s)\ud B^H(s)\right|& \le C_\epsilon(\omega) k^{-\eta/(1+\eta)} \widetilde{\Delta}_k^{H-\epsilon}\le C_\epsilon(\omega) k^{-\eta/(1+\eta)-\mu({H-\epsilon})}.
\end{align*}
Since $\mu> \frac1{H(1+\eta)}$, we have $\eta/(1+\eta)+\mu({H-\epsilon})> 1$ for some $\epsilon>0$, which implies that 
$$
\sum_{k=1}^\infty\left|\int_{\tau_k}^{\tau_k + \widetilde\Delta_k} 
\phi (s)\ud B^H(s)\right|<+\infty.
$$
The proof is finished by writing 
$$
\int_0^t \phi(s)\ud B^H(s) = \int_0^{t_n} \phi(s)\ud B^H(s) + \int_{t_n}^{t} \phi(s)\ud B^H(s)
$$
for $t\in[t_n,t_{n+1})$ and observing that the latter integral is bounded in $n$ and $t$  thanks to the above estimations.
\end{proof}
\begin{rmk}\label{phiremark}
It is evident from the proof that $\abs{\phi(t)}\le 1+\eta$ for any $t\in[0,1]$. Indeed, for $t = \tau_n$ this is true by \eqref{phibound}, for $t\in[t_{n-1},\tau_n]$, by the definition of $\tau_n$, and for $t\in[\tau_n,t_n]$, by the definition of $\phi$.
\end{rmk}

Let us now turn to the main theorem which provides a representation of a random variable as a stochastic integral of a continuous process.
\begin{thm}
\label{thm:H_rv-rep}
Assume there exists an $\mathbb F$-adapted process $\{z(t), t\ge 1\}$ having  H\"{o}lder continuous  paths of order
$a>0$ and such that $z(1) = \xi$. Then there exists an $\mathbb{F}$-adapted process $\{\psi(t), t\in[0,1]\}$  such that  $\psi \in C[0,1)$ a.s.\ and
\begin{equation}\label{repres}
\int_0^1 \psi(s)\ud B^H(s) = \xi
\end{equation}
almost surely.
\end{thm}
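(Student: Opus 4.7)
I would follow the strategy of \cite{msv} used for the basic (not-necessarily-continuous) representation, and secure continuity of $\psi$ by adapting the two-stage construction of Lemma \ref{lemma}. The idea is to define $\psi$ inductively on each interval $[t_{n-1},t_n]$ of the partition from Lemma \ref{lemma} in such a way that $\psi(t_n)=0$ at every grid point and the partial integrals $I(t_n):=\int_0^{t_n}\psi(s)\,\mathrm{d}B^H(s)$ track the given H\"older process along the grid, i.e.\ $I(t_n)-z(t_n)\to 0$, so that $I(t_n)\to z(1)=\xi$.

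On each interval $[t_{n-1},t_n]$, on a first stage $[t_{n-1},\sigma_n]$ I would let
\[
\psi(s)=c_n\,f_n\big(B^H(s)-B^H(t_{n-1})\big),
\]
where $f_n$ is as in Lemma \ref{lemma} and $c_n$ is an $\mathcal{F}_{t_{n-1}}$-measurable random scale; by Theorem \ref{ito} the running integral on this stage equals $c_n g_n(B^H(s)-B^H(t_{n-1}))^{1+\eta}$, and $\sigma_n$ is taken to be the first time $s$ at which this running integral equals the moving target $z(s)-I(t_{n-1})$. On the second stage $[\sigma_n,t_n]$ I would transition $\psi$ linearly down to $0$ exactly as in Lemma \ref{lemma}, producing a continuous $\psi$ on $[t_{n-1},t_n]$ with $\psi(t_n)=0$.

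At $\sigma_n$ we have $I(\sigma_n)=z(\sigma_n)$ by construction, hence
\[
|I(t_n)-z(t_n)|\le|z(t_n)-z(\sigma_n)|+\Big|\int_{\sigma_n}^{t_n}\psi(s)\,\mathrm{d}B^H(s)\Big|,
\]
where the first summand is bounded by the H\"older norm of $z$ times $(t_n-\sigma_n)^a$ and the second by the very same estimate used in Lemma \ref{lemma} to control the linear-transition integrals (cf.\ \eqref{phibound} and the following chain of inequalities). Both decay summably in $n$, giving $I(t_n)\to\xi$. For $t\in(t_n,t_{n+1})$ the tail $I(t)-I(t_n)$ is controlled by the uniform boundedness of $\psi$ (analog of Remark \ref{phiremark}) and small oscillations of $B^H$, giving $\int_0^t\psi(s)\,\mathrm{d}B^H(s)\to\xi$ as $t\to 1$.

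The main obstacle is to ensure that $\sigma_n$ falls strictly inside the building stage almost surely, which requires an $\mathcal{F}_{t_{n-1}}$-measurable choice of $c_n$ large enough that the range of the running integral $c_n g_n(B^H(s)-B^H(t_{n-1}))^{1+\eta}$ on $[t_{n-1},\tau_n]$ covers the possible values of the target $z(s)-I(t_{n-1})$ over that sub-interval. This requires, in turn, an a.s.\ lower bound on $|g_n(B^H(\tau_n)-B^H(t_{n-1}))|^{1+\eta}$ of the kind used in \cite[Lemma 3.1]{msv}, together with the a priori estimate $|z(s)-I(t_{n-1})|\le|z(t_{n-1})-I(t_{n-1})|+C(t_n-t_{n-1})^a$ obtained from the H\"older continuity of $z$ and the already-accumulated error. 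A secondary complication is that the target may be signed while the building integral $c_n g_n^{1+\eta}(\cdot)$ is non-negative; this can be handled by allowing $c_n$ to take either sign, chosen $\mathcal{F}_{t_{n-1}}$-measurably on the basis of the already-known error $z(t_{n-1})-I(t_{n-1})$ and an a priori bound on the target over $[t_{n-1},t_n]$.
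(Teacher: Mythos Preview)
Your proposal has a genuine gap precisely at the point you flag as the ``main obstacle''. You want to choose an $\mathcal{F}_{t_{n-1}}$-measurable scale $c_n$ so large that the building integral $c_n\,g_n(B^H(s)-B^H(t_{n-1}))^{1+\eta}$ is guaranteed to cover the target on $[t_{n-1},t_n']$. But for each fixed $n$ the random variable $\sup_{s\in[t_{n-1},t_n']}|B^H(s)-B^H(t_{n-1})|$ is strictly positive a.s.\ yet has \emph{no} $\mathcal{F}_{t_{n-1}}$-measurable positive lower bound: conditionally on $\mathcal{F}_{t_{n-1}}$ it can be arbitrarily small with positive probability. The divergence result in \cite[Lemma~3.1]{msv} (and Lemma~\ref{lemma} here) concerns the \emph{sum} of such terms, not a lower bound on any individual term, so it cannot be used the way you suggest. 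Consequently no adapted $c_n$ can force $\sigma_n<t_n'$ almost surely, and your construction breaks down on the (a.s.\ non-empty) event where the target is missed on some interval.

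The paper resolves this by keeping the scale deterministic ($n^{\kappa}$ for a suitably chosen $\kappa$) and aiming at the \emph{frozen} target $\xi_n=z(t_{n-1})$, not the moving one. When the target is missed (Case~B), one falls back on the full divergent construction of Lemma~\ref{lemma}, rescaled to $[t_{n-1},t_n']$, which \emph{does} guarantee hitting any prescribed level before $t_n'$. A Borel--Cantelli argument then shows Case~B occurs only finitely often. There is also a second, smaller gap in your outline: showing $\int_0^t\psi\,\mathrm{d}B^H\to\xi$ as $t\to1-$ does not by itself give $\int_0^1\psi\,\mathrm{d}B^H=\xi$, since $y$ could be discontinuous at $1$; the paper closes this by proving $\norm{\psi}_{\beta,[\tau_n,1]}\to 0$, which needs careful bookkeeping of all the exponents $\gamma,\kappa,\nu,\mu$.
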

\begin{proof}
We modify the proof \cite[Theorem 3.3]{msv} to show the existence of a continuous integrand. To facilitate reading, we divide the proof into four logical steps.

\textbf{Step 1. Choice of parameters and introduction of notation.}
Without loss of generality, assume that $a<H$. Firstly, choose some $\gamma>(1-\beta-H+a)^{-1}>1$. Next, it is straightforward to check the inequalities $\gamma(H-a)< \frac{\gamma (H-\beta a) - H}{\beta + H}<\gamma(1-\beta)-1$ (both are reduced to the inequality $\gamma(1-\beta-H+a)>1$). Therefore, one can choose $\kappa$ such that
$$
\gamma(H-a)< \kappa < \frac{\gamma (H-\beta a) - H}{\beta + H}< \gamma(1-\beta)-1.
$$
The middle inequality is equivalent to $\gamma a + \kappa < (\gamma-\kappa - 1) \frac{H}{\beta}$, hence, it is possible to choose $\nu$ such that 
$$
\gamma a + \kappa < \nu < (\gamma-\kappa - 1) \frac{H}{\beta}.
$$
Finally, since $\kappa<\gamma(H-\beta a) - H$ and $a<H$, we get
$$
\gamma a + \kappa < \gamma(H + (1-\beta)a) - H < \gamma(H + (1-\beta)H) - H = H(\gamma(2-\beta)-1).
$$
Therefore, we can choose $\mu$ such that 
$$
(\gamma a + \kappa)/H < \mu < \gamma(2-\beta) - 1.
$$
Observe also that from  $\kappa> \gamma (H-a)$ it follows that $\mu>\gamma$.

Define  $t_0 = 0$ and $t_n = \sum_{k=1}^{n}\Delta_{k}$, where $\Delta_k = K k^{-\gamma}$, $K = \big(\sum_{k=1}^\infty (k^{-\gamma})\big)^{-1}$; also set $t_n' = t_{n-1} + \Delta_n/2$, $n\ge 1$.  Denote $g_{n}(x) = \sqrt{x^2+n^{-2\nu}}-n^{-\nu}$, $\widetilde{\Delta}_n = Kn^{-\mu}/2$.

\textbf{Step 2. Construction}. The construction is by induction. 
We start by setting $\psi(t) = 0$ on the interval $[t_0,t_1]$. We also set $\tau_1 = t_1$. 

Denote $y(t) = \int_0^t \psi(s) dB^H(s)$, $\xi_n = z({t_{n-1}})$, $n\ge 1$.

Now let $n\ge 2$. 
We want to define the process $\psi$ on the  interval $[t_{n-1}, t_{n}]$ such that
\begin{enumerate}[(P1)]
\item $\psi(t_{n-1}) = \psi(t_{n}) = 0$;
\item $y(\tau_{n}) = \xi_{n}$ for some $\tau_{n}\in [t_{n-1},t'_{n}]$;
\item $\psi$ is linear on $[\tau_{n},\tau_{n}+\widetilde\Delta_n]$ and zero afterwards, that is,
\begin{equation}\label{psilinear}
\psi(t) = \psi(\tau_n)\frac{\tau_n + \widetilde \Delta_n-t}{\widetilde{\Delta}_n}\textbf{1}_{(\tau_n,\tau_n+\widetilde\Delta_n]}(t),\quad t\in [\tau_n, t_{n}].
\end{equation}
\end{enumerate}
We remark at once that the properties (P1) and (P3) will hold for all $n$ by construction, but (P2) will take place only starting from some (random) $n$.

Case A) $y({\tau_{n-1}})=\xi_{n-1}$. Define 
\begin{equation*}
\tau_{n} = \inf\left\{t\geq t_{n-1} : n^{\kappa}g_{n}(B^H(t)- B^H(t_{n-1}))= \left|\Lambda_n\right|\right\} \wedge t_{n}',
\end{equation*}
where 
$
\Lambda_n = \xi_n - y(t_{n-1}) = \xi_n - \xi_{n-1} -\int_{\tau_{n-1}}^{t_{n-1}}\psi(s)\ud B^H(s).
$
Put $$\psi(t) = n^\kappa g'_{n} (B_t^H - B_{t_{n-1}}^{H})\operatorname{sign} \Lambda_n,\quad t\in [t_{n-1}, \tau_{n}]$$
and define it by \eqref{psilinear} on $[\tau_{n},t_{n}]$.

From Theorem~\ref{ito} it follows that 
$$
y(t) = y(t_{n-1}) + n^\kappa g_{n}(B_t^H - B_{t_{n-1}}^{H})\operatorname{sign} \Lambda_n,\quad t\in [t_{n-1}, \tau_{n}];
$$
in particular,  $y(\tau_{n}) = \xi_n$ provided that $\tau_{n}<t'_{n}$. 

Case B)  $y({\tau_{n-1}}) \neq \xi_{n-1}$. It follows from Lemma~\ref{lemma} that there exists an adapted continuous process $\{\phi_n(t),t\in[t_{n-1},t_n']\}$ such that $v_n(t):=\int_{t_{n}}^t \phi_n(s) dB^H(s)\to \infty$, $t\to t'_n-$. Therefore we can define the stopping time $\tau_{n} = \inf \{t\in [t_{n-1},t_{n}'): v(t) = \abs{ \xi_n-y(\tau_{n-1})}\}$. Then we put $\psi (t) = \phi_n(t)\operatorname{sign}(\xi_n - y(\tau_{n-1}))$, $t\in [t_{n-1},\tau_n]$, and use \eqref{psilinear} on $[\tau_{n},t_{n}]$. Clearly, $y(\tau_n) = \xi_n$.

\textbf{Step 3. (P2) takes place for all $\pmb{n}$ large enough.}
We need to show that we have Case B) only finite number of times. For this we need to show that, almost surely, only finite numbers of events
\begin{equation*}
A_n = \Big\{\sup_{t\in[t_{n-1},t_n')} n^{\kappa}g_{n}(B^H_t- B^H_{t_{n-1}})\leq |\Lambda_n|\Big\}
\end{equation*}
happens. This is done similarly to the proof of \cite[Theorem 3.3]{msv}, but we need to take care of the extra term $\int_{\tau_{n-1}}^{t_{n-1}}\psi(s)\ud B^H(s)$ and of the function $g_n$. As in the proof of Lemma~\ref{lemma}, we have for any $\epsilon>0$
\begin{align*}
\abs{\int_{\tau_{n-1}}^{t_{n-1}}\psi(s)\ud B^H(s)}\le C_\epsilon (\omega)\abs{\phi(\tau_{n-1})} \widetilde{\Delta}_{n-1}^{H-\epsilon}\le C_\epsilon (\omega)\abs{\phi(\tau_{n-1})} n^{-\mu(H-\epsilon)}.
\end{align*}
Now we have $\abs{\psi(\tau_{n-1})}\le n^\kappa \sup_{x\in\R} \abs{g'_n(x)}\le n^\kappa$ in Case A) and $\abs{\psi(\tau_{n-1})}\le C$ in Case B) (see Remark~\ref{phiremark}). Therefore, 
$$
\abs{\int_{\tau_{n-1}}^{t_{n-1}}\psi(s)\ud B^H(s)}\le C_\epsilon(\omega) n^{\kappa-\mu(H-\epsilon)}.
$$
Thanks to the choice of $\mu$, $\kappa - \mu(H-\epsilon)< - \gamma a$ for $\epsilon$ small enough, so we get
\begin{equation}\label{kusok}
\abs{\int_{\tau_{n-1}}^{t_{n-1}}\psi(s)\ud B^H(s)}\le C_\epsilon(\omega) \Delta_n^a.
\end{equation}
Further, $g_n(x)\ge \abs{x}-n^{-\nu}$. Therefore, the event $A_n$ implies that 
$$
\sup_{t\in[t_{n-1},t_n')} n^{\kappa}\abs{B^H_t- B^H_{t_{n-1}}}\leq |\Lambda_n| + n^{\kappa - \nu}.
$$
By the assumption on $\nu$, we get that $n^{\kappa-\nu}= o(\Delta_n^{a})$, $n\to\infty$. In view of \eqref{kusok}, we get that $A_n$ implies 
$$
\sup_{t\in[t_{n-1},t_n')} n^{\kappa}\abs{B^H_t- B^H_{t_{n-1}}}\leq C(\omega)\Delta_n^a
$$
for all $n$ large enough, and the further proof goes exactly the same way as the proof of Step 2 in \cite[Theorem 3.3]{msv}. 

\textbf{Step 4. The construction provides the desired representation.} We need to show that 
$||\psi||_{1,\beta}<\infty$ almost surely and $\int_0^1\psi(s) \ud B^H(s) = \xi$. We remark that there is a small gap in the proof of \cite[Theorem 3.3]{msv}: there it is proved in fact that $\|\psi\|_{1,\beta}<\infty$ and $y(t)\to \xi$, $t\to 1-$, a.s. This, however, is not enough to guarantee \eqref{repres}, since  $y$ might be discontinuous at $1$. 

In Step 3, we proved that $y(\tau_n) = \xi_n$ for all $n$ large enough, say, for $n\ge N(\omega)$. Since $\xi_n \to \xi$, $n\to\infty$, a.s., it is enough to show that $\int_{\tau_n}^{1} \psi(s)\ud B^H(s) \to 0$, $n\to\infty$, a.s., which, in turn, would follow from $\norm{\psi}_{\beta, [\tau_n,1]}\to 0$, $n\to\infty$, a.s.

Assume further that $n\ge N(\omega)$.
Write 
$$
\norm{\psi}_{\beta, [\tau_n,1]} = I_1 + I_2, 
$$
where 
$$
I_1 =  \int_{\tau_n}^1\frac{\abs{\psi(t)}}{(t-\tau_n)^\beta} \ud s,\quad I_2  = \int_{\tau_n}^1\int_{\tau_n}^{t}\frac{\abs{\psi(t)-\psi(s)}}{(t-s)^{\beta+1}}\ud s\,\ud t.
$$
Estimate
\begin{align*}
I_1 & = \int_{\tau_n}^{t_n}\frac{\abs{\psi(t)}}{(t-\tau_n)^\beta}\ud t +  \sum_{k=n}^{\infty} \int_{t_k}^{t_{k+1}}\frac{\abs{\psi(t)}}{(t-\tau_n)^{\beta}}\ud t\\
& \le C n^\kappa {\Delta}_n^{1-\beta} + C\sum_{k=n}^{\infty} \frac{k^{\kappa}\Delta_k}{(t_k - \tau_{n-1})^{\beta}} \le Cn^{\kappa - \gamma(1-\beta)} + C {\Delta}_n^{-\beta} \sum_{k=n}^{\infty} k^{\kappa - \gamma}\\
&\le C n^{\kappa- \gamma(1-\beta)+1}\to 0,\quad n\to\infty.
\end{align*}
The last inequality here  is due to the choice of $\kappa$. Further,
\begin{align*}
I_2 & = 
\sum_{k=n}^\infty \int_{t_k}^{\tau_{k+1}}  \int_{\tau_n}^{t} \psi(t,s)\ud s \,\ud t 
+ \sum_{k=n}^\infty \int_{\tau_{k}}^{t_{k}}  \int_{\tau_n}^{t} \psi(t,s)\ud s \,\ud t\\
&  = \sum_{k=n}^\infty \int_{t_k}^{\tau_{k+1}}  \int_{\tau_n}^{t_k} \psi(t,s)\ud s \,\ud t 
+ \sum_{k=n}^\infty \int_{\tau_{k}}^{t_{k}}  \int_{\tau_n}^{\tau_k} \psi(t,s)\ud s \,\ud t\\
& + \sum_{k=n}^\infty \int_{t_k}^{\tau_{k+1}}  \int_{t_k}^{t} \psi(t,s)\ud s \,\ud t 
+ \sum_{k=n}^\infty \int_{\tau_{k}}^{t_{k}}  \int_{\tau_k}^{t} \psi(t,s)\ud s \,\ud t,
\end{align*}
where $\psi(t,s) = \abs{\psi(t)-\psi(s)}(t-s)^{-\beta-1}$. 
Next we estimate the terms one by one. First,
\begin{align*}
&\sum_{k=n}^{\infty}\int_{t_k}^{\tau_{k+1}}  \int_{\tau_n}^{t_k} \psi(t,s)\ud s \,\ud t\le 2\sum_{k=n}^{\infty} k^{\kappa}\int_{t_k}^{\tau_{k+1}}  \int_{\tau_n}^{t_k}\frac{\ud s\, \ud t}{(t-s)^{\beta + 1}}\\& \le C\sum_{k=n}^{\infty} k^\kappa\int_{t_k}^{\tau_{k+1}}\frac{\ud t}{(t-\tau_n)^{\beta}}
 \le C \sum_{k=n}^{\infty}k^{\kappa} {\Delta}_n^{1-\beta}\le Cn^{\kappa - \gamma(1-\beta)+1}\to 0,\quad n\to\infty,
\end{align*}
the last is due to the choice of $\kappa$. 
Similarly, 
$$
\sum_{k=n}^\infty \int_{\tau_{k}}^{t_{k}}  \int_{\tau_n}^{\tau_k} \psi(t,s)\ud s \,\ud t\to 0, \quad n\to\infty.
$$
Further, denote $\delta_k = k^{-\nu/H}$ and  write 
$$
\int_{t_k}^{\tau_{k+1}}  \int_{t_k}^{t} \psi(t,s)\ud s \,\ud t  = \int_{t_k}^{\tau_{k+1}}\left(\int_{t_k}^{t-\delta_k}+\int_{t-\delta_k}^{t}\right)\psi(t,s)\ud s \,\ud t.
$$
Now 
\begin{align*}
&\sum_{k=n}^\infty\int_{t_k}^{\tau_{k+1}}\int_{t_k}^{t-\delta_k}\psi(t,s)\ud s \,\ud t\le 2\sum_{k=n}^\infty k^{\kappa}\int_{t_k}^{\tau_{k+1}}\int_{t_k}^{t-\delta_k}(t-s)^{-\beta-1}\ud s\, \ud t\\
&\le C\sum_{k=n}^\infty k^{\kappa}\int_{t_k}^{\tau_{k+1}}\delta_k^{-\beta}\ud t\le C\sum_{k=n}^{\infty}  k^{\kappa-\gamma+\beta \nu/H}\le Cn^{\kappa-\gamma+\beta \nu/H+1}\to 0, \quad n\to\infty.
\end{align*}
To estimate the second part, observe that $\abs{g_k''(x)}\le k^{\nu}$. Therefore, for $\epsilon>0$ small enough,
\begin{align*}
&\sum_{k=n}^{\infty}\int_{t_k}^{\tau_{k+1}}  \int_{t-\delta_k}^{t} \frac{\abs{\psi(t)-\psi(s)}}{(t-s)^{\beta+1}}\ud s \,\ud t\le \sum_{k=n}^{\infty}
k^{\kappa+\nu}\int_{t_k}^{\tau_{k+1}}  \int_{t-\delta_k}^{t}\frac{\abs{B^H(t)-B^H(s)}}{(t-s)^{\beta+1}}\ud s \,\ud t\\
& \le C_\epsilon(\omega)\sum_{k=n}^{\infty}k^{\kappa+\nu}\int_{t_k}^{\tau_{k+1}}  \int_{t-\delta_k}^{t}(t-s)^{H-\epsilon-\beta-1}\ud s \,\ud t
\le C_\epsilon(\omega) \sum_{k=n}^{\infty} k^{\kappa+\nu} \Delta^{\vphantom{H-\beta}}_k \delta_k^{H-\epsilon-\beta}\\&\le C_\epsilon(\omega) \sum_{k=n}^{\infty} k^{\kappa+\nu-\gamma - \nu (H-\epsilon-\beta)/H} \le C_\epsilon(\omega )n^{\kappa-\gamma+\beta \nu/H +\epsilon/H +1}\to 0, \quad n\to\infty.
\end{align*}
Finally, observing that $\abs{\psi(t)-\psi(s)}\le \Delta_k^{-1}\abs{t-s}$ for $t,s\in[\tau_k,t_k]$ and $\mu - \gamma(2-\beta)<-1$, we get
\begin{align*}
&\sum_{k=n}^\infty \int_{\tau_{k}}^{t_{k}}  \int_{\tau_k}^{t} \psi(t,s)\ud s \,\ud t \le \sum_{k=n}^\infty\widetilde{\Delta}_k^{-1}\int_{\tau_{k}}^{t_{k}}  \int_{\tau_k}^{t} (t-s)^{-\beta}\ud s\, \ud t\\
& \le C\sum_{k=n}^{\infty} \widetilde{\Delta}_k^{-1} \Delta_k^{2-\beta} \le C\sum_{k=n}^{\infty} k^{\mu - \gamma(2-\beta)}\le C n^{\mu - \gamma(2-\beta)+1}\to 0,\quad n\to\infty.
\end{align*}
The proof is now complete.

\end{proof}

\section{Discussion} \label{sec:blahblahblah}

In this section we give previous related results and compare them to ours. 

The first stochastic representation result is the classical It\^o theorem, which says the following. Let the filtration $\mathbb F^W = \{\mathcal F^W_t, t\in[0,1]\}$ be generated by a standard Wiener process. Then any centered quadratic integrable $\mathcal F^W_1$-measurable random variable $\xi$ can be represented in a form
\begin{equation}\label{wienerrepr}
\xi = \int_{0}^{1} \psi(s) \ud W(s),
\end{equation}
where $\psi$ is an $\mathbb F^W$-adapted process such that $\int_0^1 \E \psi(s)^2 \ud s<+\infty$. However, it is well known that the It\^o integral can be defined once $\int_0^1 \psi(s)^2 \ud s<\infty$ a.s. For such extended definition, Dudley in \cite{dudley} showed that \textit{every} $\mathcal F_1^W$-measurable random variable is representable in the form \eqref{wienerrepr}. 

For fractional Brownian motion, first representation results of such type were obtained in \cite{msv}. Like in this article, the assumption of the main result in \cite{msv} is that there exists a H\"older continuous adapted process $\{z(t),t\in[0,1]\}$ such that $z(1) = \xi$ (let us call it the H-assumption). The authors showed in \cite{msv} that the H-assumption implies the existence of the representation \eqref{repres}, and, moreover,  that the existence of the representation \eqref{repres} with $\psi\in C[0,1]$ implies the H-assumption. In this paper we were able to show that the H-assumption implies the existence of the representation \eqref{repres} with $\psi\in C[0,1)$, so now we are much closer to some criterion.

Some remarks about generalizations. Firstly, in \cite{msv} for \textit{every} random variable the existence of an improper representation was shown, namely, that there exists an adapted process $\psi$ such that the integral $y(t) = \int_0^t \psi(s)\ud B^H(s)$ exists for every $t<1$ and $\lim_{t\to 1-} y(t) = \xi$. It is not difficult to prove a similar result with $\psi\in C[0,1)$ by using our auxiliary construction of Lemma~\ref{lemma}; however, as in \cite{msv}, we need an additional assumption that the filtration $\mathbb F$ is continuous at $1$, i.e. $\mathcal F_1 = \sigma\left(\bigcup_{t<1} \mathcal F_t\right)$. Secondly, in \cite{lauri} the results of \cite{msv} are generalized to a wider class of Gaussian processes. Similarly, it is possible to prove Theorem~\ref{thm:H_rv-rep} for a bigger class of processes (not even necessarily Gaussian): in fact, only properties needed are H\"older continuity and some small ball estimates.

Finally, we remark that there is an alternative definition of a stochastic integral with respect to fractional Brownian motion. It appears under different names in the scientific literature: the divergence integral, the Skorokhod integral, or the white noise integral. For this definition, in \cite{hu-oksendal}, see also \cite{elliott-hoek}, an analogue of Clark--Ocone formula was proved, which gives a representation \eqref{repres} with the divergence integral.  We also  mention paper  \cite{bender}, where the uniqueness of an adapted integrand in such representation is established. This is in sharp contrast with our findings for pathwise integral, where the representation is very far from being unique.

\vskip0.5cm

\bibliographystyle{plain}      
\bibliography{cont_integrand}

\begin{thebibliography}{1}

\bibitem{amv}
Ehsan Azmoodeh, Yuliya Mishura, and Esko Valkeila.
\newblock On hedging {E}uropean options in geometric fractional {B}rownian
  motion market model.
\newblock {\em Statist. Decisions}, 27(2):129--143, 2009.

\bibitem{bender}
Christian Bender.
\newblock The restriction of the fractional {I}t\^o integral to adapted
  integrands is injective.
\newblock {\em Stoch. Dyn.}, 5(1):37--43, 2005.

\bibitem{dudley}
Richard~M. Dudley.
\newblock {Wiener functionals as Ito integrals.}
\newblock {\em Ann. Probab.}, 5:140--141, 1977.

\bibitem{elliott-hoek}
Robert~J. Elliott and John van~der Hoek.
\newblock A general fractional white noise theory and applications to finance.
\newblock {\em {Math. Finance}}, 13(2):301--330, 2003.

\bibitem{hu-oksendal}
Yaozhong Hu and Bernt {\O}ksendal.
\newblock Fractional white noise calculus and applications to finance.
\newblock {\em Infin. Dimens. Anal. Quantum Probab. Relat. Top.}, 6(1):1--32,
  2003.

\bibitem{msv}
Yuliya Mishura, Georgiy Shevchenko, and Esko Valkeila.
\newblock Random variables as pathwise integrals with respect to fractional
  {B}rownian motion.
\newblock {\em Stochastic Process. Appl.}, 123(6):2353--2369, 2013.

\bibitem{lauri}
Lauri Viitasaari.
\newblock Integral representation of random variables with respect to gaussian
  processes, 2013.
\newblock arXiv:math.PR/1307.7559.

\bibitem{zahle}
Martina Z{\"a}hle.
\newblock {Integration with respect to fractal functions and stochastic
  calculus. {I}.}
\newblock {\em Probab. Theory Relat. Fields}, 111(3):333--374, 1998.

\end{thebibliography}

\end{document}